\newtheorem{defi}{Definition}[section]
\newtheorem{theo}[defi]{Theorem}
\newtheorem{lemm}[defi]{Lemma}
\newtheorem{coro}[defi]{Corollary}
\newtheorem{rem}{Remark}
\title{On integer network synthesis problem\\ with tree-metric cost}
\author{Hiroshi HIRAI and Masashi NITTA \\
Graduate School of Information Science and Technology,   \\
The University of Tokyo, Tokyo, 113-8656, Japan.\\
\texttt{\normalsize hirai@mist.i.u-tokyo.ac.jp}\\
\texttt{\normalsize masashi\_nitta@ipc.i.u-tokyo.ac.jp}
}
\begin{document}
\maketitle
\begin{abstract}
	Network synthesis problem (NSP) is the problem of designing a
	minimum-cost network (from the empty network) satisfying a given
	connectivity requirement.
	Hau, Hirai, and Tsuchimura showed that
	if the edge-cost is a tree metric,
	then a simple greedy-type algorithm solves NSP to obtain a
	half-integral optimal solution.
	This is a generalization of the classical result
	by Gomory and Hu for the uniform edge-cost.
	
	In this note, we present an integer version of Hau, Hirai, and
	Tsuchimura's result for integer network synthesis problem (INSP),
	where a required network must have an integer capacity.
	We prove that if each connectivity requirement is at least 2 and the
	edge-cost is a tree metrc, then INSP can be solved in polynomial time.
\end{abstract}

Keywords: Combinatorial optimization, network  synthesis problem, tree
metric, polynomial time algorithm, splitting-off

\section{Introduction}

{\it Network synthesis problem(NSP)} is one of the most simplest network design problems, which asks to find a minimum-cost network (from the empty network) satisfying a given connectivity requirement.
In a classic paper~\cite{ref:GomoryHu} of combinatorial optimization,
Gomory and Hu algorithmically proved that NSP admits a half-integral optimal solution provided the edge-cost is uniform.
Note that this half-integrality property fails under the general edge-cost.
Recently, Hau, Hirai, and Tsuchimura~\cite{ref:HHT} showed that this classical result is naturally generalized for a tree-metric
edge-cost.
Here a {\it tree metric} is a special metric represented by the
shortest path distance on a weighted tree.

In this note, we continue this line of research, and address the {\it integer network synthesis problem (INSP)},  where a
required network must have an integer capacity.
Since INSP is NP-hard in general, it is interesting  to explore a
class of edge-costs tractable for INSP.
An old result, due to Chou and Frank~\cite{ref:chou-frank}, says that
INSP can also be solved in polynomial time provided the edge-cost is uniform.
Frank~\cite{ref:Frank} developed an elegant framework, based on splitting-off, for connectivity augmentation problem (which generalizes NSP).
As a consequence of his result,
INSP can be solved in polynomial time provided the edge-cost is {\em node-induced}.
Here a node-induced cost is a special tree metric corresponding to a star.

The main result of this note is a partial generalization: If each
connectivity requirement is at least 2 and the edge-cost is a
tree metric, then INSP can be solved in polynomial time.
Our algorithm modifies Frank's framework to apply splitting-off to the tree-network corresponding to the tree-metric, whereas Frank applied splitting-off to the star-network (corresponding to a node-induced cost).

The rest of this paper is organized as follows.
In Section 2, we state our result.
In Section 3,  we describe our algorithm to solve INSP with tree-metric cost.
\paragraph{Notation}
Let $\mathbb{R}$ and $\mathbb{R}_+$ be the sets of reals and nonnegative reals, respectively.
Also, let $\mathbb{Z}$ and $\mathbb{Z}_+$ be the sets of integers and nonnegative integers, respectively.
For an undirected graph $G$ and a node subset $X \subseteq V(G)$, 
let $\delta_G(X)$ denote the set of edges connecting $X$ and $V(G)\setminus X$.
If the graph $G$ is obvious in the context, then $\delta_G(X)$ is written as $\delta(X)$.
Also $\delta(\{v\})$ is written as $\delta(v)$.
The {\it edge-connectivity} between $u$ and $v$ in $G$ is the maximum number of pairwise edge-disjoint paths connecting $u$ and $v$.
For a function $x \colon S \rightarrow \mathbb{R}$ on a set $S$ and a subset $X \subseteq S$, we denote $\sum_{u \in X} x(u)$ simply by $x(X)$.

\section{Network synthesis problem}
Let $K_V$ denote a complete undirected graph on node set $V$.
Let $r \colon V \times V \rightarrow \mathbb{Z}_+$ be a connectivity requirement, and let $a \colon E(K_V) \rightarrow \mathbb{R}_+$ be an edge-cost.
A {\it realization} (of $r$) is a capacity function $y \colon E(K_V) \rightarrow \mathbb{R}_+$ such that for all distinct $s, t \in V$, it holds $y(\delta(X)) \geq r(s, t)$ for any subset $X \subseteq V$, i.e., there is an $(s, t)$-flow of value $r(s, t)$ under capacity $y$.
The {\it cost} of a realization $y$ is defined as $\sum_{e \in E(K_V)} a(e)y(e)$.
The {\it network synthesis problem (NSP)} asks to find a realization of minimum-cost.

Gomory and Hu \cite{ref:GomoryHu} algorithmically proved that NSP admits a half-integral minimum-cost realization under a uniform edge-cost.
Let $R(X) \coloneqq \max\{r_{ij}\mid i\in X \not\ni j\}\ (\phi \not=X \subset V)$, where $R(\{u\})$ is simply written as $R(u)$.
\begin{theo}[{\cite{ref:GomoryHu}}]\label{theo:gomory-hu}
	Suppose that $a(e) = 1$ for $e \in E(K_V)$.
	Then the following hold:
	\begin{itemize}
		\item[{\rm(1)}]
		A half-integral minimum-cost realization exists, and can be found in polynomial time.
		\item[{\rm(2)}] 
		The minimum-cost is equal to 
		$\displaystyle \frac{1}{2}\sum_{u\in V}R(u).$
	\end{itemize}
\end{theo}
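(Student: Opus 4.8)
The plan is to prove both parts simultaneously by matching a cut lower bound against an explicit construction. For the lower bound in (2), fix any realization $y$. For each node $u$, choose $j$ attaining $R(u) = r_{uj}$; the singleton cut $\delta(u)$ separates $u$ from $j$, so the cut condition forces $y(\delta(u)) \ge r_{uj} = R(u)$. Summing over $u \in V$ and using that every edge $e = \{u,v\}$ lies in exactly the two cuts $\delta(u)$ and $\delta(v)$, I obtain $\sum_{u \in V} y(\delta(u)) = 2\sum_{e} y(e)$, which under the uniform cost $a \equiv 1$ is exactly twice the cost of $y$. Hence $\mathrm{cost}(y) \ge \tfrac{1}{2}\sum_{u\in V} R(u)$ for every realization, which is the desired lower bound.

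For the upper bound and part (1), I would exhibit a half-integral realization attaining this bound via the Gomory--Hu construction. The first step is a dominant-requirement reduction: replace $r$ by $\hat r$, where $\hat r_{ij} := \max_{P} \min_{e \in P} r_e$ is the bottleneck value over $i$--$j$ paths $P$, which is read off from a maximum spanning tree $T^*$ of $(K_V, r)$. One checks that $\hat r \ge r$, so any realization of $\hat r$ realizes $r$, while the values $R(u)$ are unchanged; thus it suffices to realize $\hat r$, which moreover satisfies the ultrametric inequality $\hat r_{ij} \ge \min(\hat r_{ik}, \hat r_{kj})$. The second step builds $y$ level by level: relabel $V = \{u_1,\dots,u_n\}$ so that $R(u_1) \ge R(u_2) \ge \cdots \ge R(u_n)$, set $R(u_{n+1}) := 0$, and for each level $k$ distribute capacity $\tfrac{1}{2}\bigl(R(u_k) - R(u_{k+1})\bigr)$ uniformly along a cycle through the top-$k$ requirement nodes $u_1,\dots,u_k$ (the degenerate levels $k \le 2$, which occur only when two nodes share the top value, are handled by a single edge). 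Each increment is a multiple of $\tfrac{1}{2}$ since the $R(u_i)$ are integers, so $y$ is half-integral; and a telescoping sum of the per-level degree contributions gives $y(\delta(u_i)) = \sum_{k\ge i}\bigl(R(u_k)-R(u_{k+1})\bigr) = R(u_i)$ for every $i$, whence $\mathrm{cost}(y) = \tfrac{1}{2}\sum_{u} y(\delta(u)) = \tfrac{1}{2}\sum_{u} R(u)$, matching the lower bound.

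The main obstacle is verifying that this cycle-based $y$ is actually a realization, that is, that $y(\delta(X)) \ge \hat r_{st}$ holds for every pair $s,t$ and every $X$ separating them, rather than merely the degree bounds $y(\delta(u)) = R(u)$. I would reduce this to the following: a cut $X$ separating $s$ and $t$ must cross, in at least two edges, every level cycle whose threshold is at least $\hat r_{st}$, and the nested (laminar) structure of the top-$k$ node sets, together with the ultrametric inequality for $\hat r$, guarantees that these crossing capacities sum to at least $\hat r_{st}$. This nested-cut argument is the combinatorial heart of the proof. Finally, the maximum spanning tree, the relabeling, and the $O(n)$ cycles are all computable in polynomial time, so the realization is found in polynomial time, completing (1).
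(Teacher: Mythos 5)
The paper does not actually prove this theorem---it is quoted from Gomory and Hu \cite{ref:GomoryHu} as the classical starting point---so your proposal can only be measured against the classical synthesis argument, which it essentially reconstructs. Your lower bound is correct and standard: the singleton cut gives $y(\delta(u)) \ge R(u)$ for every realization $y$, and summing over $u$ with $\sum_u y(\delta(u)) = 2\sum_e y(e)$ yields $\mathrm{cost}(y) \ge \frac{1}{2}\sum_{u\in V} R(u)$. Your construction---capacity $\frac{1}{2}(R(u_k)-R(u_{k+1}))$ on a cycle through the nested top-$k$ sets---is also the right one, is half-integral, and attains the bound by the degree telescoping you state.

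However, the one step you defer, and the way you propose to close it, contains a genuine error, though an easily repaired one. The claim that a cut $X$ separating $s$ and $t$ must cross, in at least two edges, \emph{every} level cycle whose threshold is at least $\hat r_{st}$ is false: a level-$k$ cycle can contain $s$ but not $t$ (when $s = u_i$, $t = u_j$ with $i \le k < j$), and then $X$ may swallow that entire cycle and cross it zero times. Concretely, with $R(u_1)=R(u_2)=4$ and $R(u_3)=2$, the level-$2$ edge $u_1u_2$ has threshold $4 \ge \hat r_{u_1 u_3}$, yet the cut $X=\{u_1,u_2\}$ separating $u_1$ from $u_3$ does not cross it. The correct, and much simpler, accounting uses only the cycles containing \emph{both} $s$ and $t$, i.e., the levels $k \ge \max(i,j)$: each such cycle is a closed walk meeting both sides of the cut, hence is crossed at least twice, so $y(\delta(X)) \ge \sum_{k \ge \max(i,j)} \bigl(R(u_k)-R(u_{k+1})\bigr) = \min\bigl(R(s),R(t)\bigr) \ge r(s,t)$, the last inequality holding because $r(s,t) \le R(s)$ and $r(s,t) \le R(t)$ by the definition of $R$. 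This repair also shows that your entire first step---the maximum-spanning-tree reduction to the dominant requirement $\hat r$ and the ultrametric inequality---is superfluous: the cycle construction realizes the dominating requirement $\min(R(s),R(t))$ directly, so no laminar or ultrametric argument is needed anywhere. (Note also that $R(u_1)=R(u_2)$ always holds, since the maximum of $R$ is attained by both endpoints of a maximizing pair, so the degenerate $k=1$ level has zero weight and your half-integrality and telescoping computations go through unchanged.) With that one substitution your proof is complete and is exactly the classical argument behind the cited result.
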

Recently, Hau, Hirai and Tsuchimura \cite{ref:HHT} generalized Theorem \ref{theo:gomory-hu} to a tree-metric cost.
Here an edge-cost $a \colon E(K_V) \rightarrow \mathbb{R}_+$ is called a {\it tree-metric} if there exist a tree $T$ with $V \subseteq V(T)$ and a nonnegative edge-length $l\colon E(T) \rightarrow \mathbb{R}_+$ such that for all distinct $i, j \in V$, $a(ij)$ is equal to the length of the unique path between $i$ and $j$ in $T$ with respect to $l$.
In this case, we say that $a$ is {\it represented} by $T$ and $l$.
Fix an arbitrary $v \in V$.
For edge $e \in E(T)$, let $X_e \subseteq V$ denote the subset of $V$ consisting of nodes reachable from $v$ in $T$ not using edge $e$.
\begin{theo}[\cite{ref:HHT}]\label{theo:HHT}
	Suppose that $a$ is a tree metric represented by a tree $T$  and a nonnegative edge-length $l\colon E(T) \rightarrow \mathbb{R}_+$.
	Then the following hold:
	\begin{itemize}
		\item[{\rm(1)}] 
		A half-integral minimum-cost realization exists, and can be found in polynomial time.
		\item[{\rm(2)}] 
		The minimum-cost is equal to $\displaystyle \sum_{e \in E(T)} l(e)R(X_e).$
	\end{itemize}
\end{theo}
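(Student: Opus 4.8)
The plan is to prove the lower bound in (2) by rewriting the cost against the tree cuts, and then to meet it with an explicitly constructed half-integral realization. For the rewriting, substitute $a(ij)=\sum_{e\in P_{ij}}l(e)$, where $P_{ij}$ is the $i$--$j$ path in $T$, and interchange the summations:
\[
\sum_{ij\in E(K_V)}a(ij)\,y(ij)=\sum_{e\in E(T)}l(e)\sum_{ij:\,e\in P_{ij}}y(ij)=\sum_{e\in E(T)}l(e)\,y(\delta(X_e)),
\]
the last step because $e\in P_{ij}$ means exactly that $i$ and $j$ lie on opposite sides of $e$, i.e.\ $ij\in\delta(X_e)$ in $K_V$. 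As $y$ is a realization, $y(\delta(X_e))\ge R(X_e)$, and $l\ge 0$, so every realization costs at least $\sum_{e\in E(T)}l(e)R(X_e)$. It then remains to produce a half-integral realization attaining equality, i.e.\ one with $y(\delta(X_e))=R(X_e)$ for \emph{every} tree edge $e$ simultaneously.

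To build such a $y$, I would superimpose uniform ``cycle layers'' indexed by the tree. Let $0=\rho_0<\rho_1<\dots<\rho_k$ be the distinct values among $\{R(X_e):e\in E(T)\}$, and for each level set $A_t:=\{e\in E(T):R(X_e)\ge\rho_t\}$. The subgraph $(V(T),A_t)$ is a forest; for each connected component $C$ of it that contains an edge, order the terminals of $C$ by a depth-first (preorder) traversal of $C$ and place capacity $(\rho_t-\rho_{t-1})/2$ on each edge of the cycle through these terminals (joined by edges of $K_V$). Define $y$ as the sum of all these layer capacities. Since $r$, hence each $\rho_t$, is an integer, each layer contributes a half-integer and $y$ is half-integral; the number of levels and components is polynomial, so $y$ is computable in polynomial time.

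Two facts drive the verification. First, the preorder cycle of a subtree $C$ has each edge of $C$ on exactly two of its $K_V$-edges' tree-paths and no other edge of $T$ on any; hence this layer adds $2\cdot(\rho_t-\rho_{t-1})/2=\rho_t-\rho_{t-1}$ to $y(\delta(X_e))$ precisely when $e\in A_t$, and telescoping over $t$ gives $y(\delta(X_e))=R(X_e)$ for every tree edge, so the cost equals $\sum_e l(e)R(X_e)$. The same count yields the tour-length identity $\sum_{\text{consecutive }ij}a(ij)=2\sum_{e\in C}l(e)$ for the tree metric. Second, for any pair $i,j$ and any level $\rho_t\le r_{ij}$, every edge $e$ on $P_{ij}$ satisfies $R(X_e)\ge r_{ij}\ge\rho_t$, so $P_{ij}\subseteq A_t$ and $i,j$ lie in one component $C$; the $2$-edge-connected cycle on $C$ then contributes $\rho_t-\rho_{t-1}$ to the minimum $i$--$j$ cut. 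Since the minimum cut of a sum of capacities is at least the sum of the minimum cuts, summing over $\rho_t\le r_{ij}$ gives $\lambda_y(i,j)\ge r_{ij}$, so $y$ is a realization.

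The crux, and the step I expect to be the main obstacle, is the tree-aware grouping into components. The layers must be organized by connectivity in $T$ through the active edges $A_t$, \emph{not} by connectivity in the requirement graph: a grouping by requirement components can place two separate components straddling one tree cut, doubling its load and breaking the equality $y(\delta(X_e))=R(X_e)$ while validity still holds. The inclusion $P_{ij}\subseteq A_t$ for $r_{ij}\ge\rho_t$ is exactly what reconciles the two demands, forcing demanded pairs into a single cycle (validity) and forcing each tree cut to be loaded by precisely the layers with $e\in A_t$ (tightness). Assembling the tightness computation, the validity estimate, and the half-integrality count then completes the proof.
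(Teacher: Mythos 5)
Your proposal is correct in substance, but note that this paper never proves Theorem~\ref{theo:HHT} at all: the statement is imported from \cite{ref:HHT}, and the only proof machinery developed here is for the integer analogue (Theorem~\ref{theo:main1}), which goes through $r$-feasible tree capacities and Mader's splitting-off theorem (Theorem~\ref{theo:mader}). Your route is a self-contained alternative to that machinery: you get the lower bound from the exchange-of-summation identity $\sum_{ij}a(ij)y(ij)=\sum_{e\in E(T)}l(e)\,y(\delta(X_e))$, and you meet it by layering uniform cycles of capacity $(\rho_t-\rho_{t-1})/2$ on the DFS-ordered terminals of each component of the level forest $A_t$ --- the natural tree-structured generalization of Gomory--Hu's cycle synthesis \cite{ref:GomoryHu}. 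Your inclusion $P_{ij}\subseteq A_t$ correctly does double duty: it places demanded pairs on one cycle, and (applied to the pair attaining $R(X_e)$) it shows every edge of a component $C$ separates two terminals of $C$, which is exactly what validates the ``each edge of $C$ covered exactly twice'' count, since the terminals on one side of $e$ form a contiguous block in preorder and hence the cyclic terminal sequence crosses $e$ exactly twice. Compared with the paper's splitting-off route, your construction yields the half-integral optimum and the cost formula in one explicit stroke; the splitting-off route is what the paper needs anyway because it controls integrality, which fractional cycle layers cannot.

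There is one small slip in your validity estimate. Summing $\rho_t-\rho_{t-1}$ over the levels with $\rho_t\le r_{ij}$ telescopes to $\max\{\rho_t:\rho_t\le r_{ij}\}$, which is strictly less than $r_{ij}$ whenever $r_{ij}$ is not among the values $\{R(X_e):e\in E(T)\}$; for instance, with levels $\{3,5\}$ and $r_{ij}=4$ your sum gives only $3$. The repair is immediate within your own framework: set $m_{ij}\coloneqq\min_{e\in P_{ij}}R(X_e)$. Then $m_{ij}\ge r_{ij}$, $m_{ij}$ is itself one of the levels, and $P_{ij}\subseteq A_t$ holds for every $\rho_t\le m_{ij}$ (not merely for $\rho_t\le r_{ij}$), so summing the cycle contributions over these levels telescopes to $\lambda_y(i,j)\ge m_{ij}\ge r_{ij}$, completing the argument.
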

This theorem is a generalization of Theorem~\ref{theo:gomory-hu}.
Indeed, the uniform edge-cost is represented by $T$ and $l$ as:
\begin{itemize}
	\item $V(T) \coloneqq V \cup \{s\}$ and $E(T) \coloneqq \{sv \mid v \in V\}$.
	\item $l(e) \coloneqq 1/2$ for $e \in E(T)$.
\end{itemize}

Next, we consider an integer version {\it INSP} of NSP, where a realization $y$ must satisfy $y(e) \in \mathbb{Z}_+$ for each edge $e \in E(K_V)$.
Chou and Frank~\cite{ref:chou-frank} gave a polynomial time algorithm for finding a minimum-cost integer realization together with a formula of the minimum-cost:
\begin{theo}[\cite{ref:chou-frank}]\label{theo:frank-chou}
	Suppose that $a(e) = 1$ for $e \in E(K_V)$.
	Then the following hold:
	\begin{itemize}
		\item[{\rm (1)}] 
		A minimum-cost integer realization can be found in polynomial time.
		\item[{\rm (2)}] 
		If $R(u) \not= 1$ for all $u \in V$, then the minimum-cost is equal to $\displaystyle \left\lceil \frac{1}{2}\sum_{u\in V}R(u) \right\rceil.$
	\end{itemize}
\end{theo}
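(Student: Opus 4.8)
The plan is to prove the formula in part~(2) by matching an elementary counting lower bound with an upper bound produced by splitting-off, and to deduce part~(1) from Frank's augmentation framework applied to the empty graph.

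I would begin with the standard reformulation of the realization condition: $y$ is a realization if and only if $y(\delta(X)) \geq R(X)$ for every nonempty proper $X \subset V$, because the tightest flow constraint across a fixed cut $X$ is $\max\{r_{ij}\mid i\in X\not\ni j\}=R(X)$. Specializing to singletons gives $y(\delta(v))\geq R(v)$, and summing over $v\in V$ yields
\[
2\sum_{e\in E(K_V)}y(e)=\sum_{v\in V}y(\delta(v))\geq\sum_{v\in V}R(v).
\]
Since $\sum_e y(e)$ is a nonnegative integer, this gives $\sum_e y(e)\geq\lceil\frac12\sum_v R(v)\rceil$, the lower bound required for part~(2).

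For the matching construction I would introduce an auxiliary node $s$ and build, on the active node set $W\coloneqq\{v\in V\mid R(v)>0\}$, the graph $G$ on $W\cup\{s\}$ in which $s$ is joined to each $v\in W$ by exactly $R(v)$ parallel edges. As every $u$--$w$ path in $G$ runs through $s$, the local edge-connectivity satisfies $\lambda_G(u,w)=\min\{R(u),R(w)\}\geq r(u,w)$, so $G$ realizes all requirements and $\deg_G(s)=\sum_v R(v)$. When this degree is odd I would add one further edge $sv$, which only raises connectivities and makes $\deg_G(s)$ even and equal to $2\lceil\frac12\sum_v R(v)\rceil$. Now the hypothesis $R(u)\neq1$ forces $R(u)\geq2$ on $W$, hence $\lambda_G(u,w)\geq2$ for all $u,w\in W$; this is exactly the hypothesis of Lovász's complete splitting-off theorem, so all edges at $s$ can be split off while preserving every $\lambda_G(u,w)$. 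The resulting integer graph $H$ on $W$ is then a realization with $\sum_e y(e)=\frac12\deg_G(s)=\lceil\frac12\sum_v R(v)\rceil$, which meets the lower bound and proves part~(2). As the complete splitting can be carried out with polynomially many max-flow computations, the construction runs in polynomial time.

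The hypothesis $R(u)\neq1$ is essential here, and this is where the difficulty concentrates. If unit requirements are allowed, the formula fails outright: requirements all equal to $1$ among $k+1$ nodes force a connected (hence spanning-tree) realization of cost $k$, far exceeding $\lceil\frac12\sum_v R(v)\rceil\approx k/2$. The obstruction is already visible in the splitting-off step, since Mader's theorem admits a legal splitting pair only away from its exceptional configurations (a degree-$3$ vertex $s$ or a cut edge at $s$), and these are triggered precisely by local connectivity~$1$; the condition $R(u)\geq2$ is what keeps $\deg_G(s)$ even-valued above $3$ and rules out cut edges at $s$. For part~(1), where no such hypothesis is available, I would appeal to Frank's general splitting-off framework~\cite{ref:Frank} for connectivity augmentation, applied to the empty starting graph: it computes a minimum complete splitting at $s$ in polynomial time for arbitrary requirements by handling the exceptional cases of Mader's theorem directly, and thereby yields a minimum-cost integer realization even when the clean formula of part~(2) does not hold.
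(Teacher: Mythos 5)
The paper gives no proof of this theorem: it is quoted as a classical result of Chou and Frank, with the $R(u)=1$ case handled by the reduction remark immediately after the statement. The natural benchmark is therefore the splitting-off machinery the paper develops for its tree-metric generalization (Theorem \ref{theo:main1} and Algorithm 1), and your argument is precisely the specialization of that machinery to the star representation of the uniform cost: your auxiliary graph is the star with capacities $c(sv)=R(v)$, your parity-fixing extra edge $sv$ is exactly the minimum inner-odd join $\{sv\}$ the paper mentions after Theorem \ref{theo:ore}, and your degree lower bound $2\sum_e y(e)\geq\sum_v R(v)$ is the singleton case of the paper's condition (c2). So the route is the right one and matches the paper's framework in spirit.

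There is, however, one step that fails as literally written: the appeal to ``Lov\'asz's complete splitting-off theorem'' with hypothesis $\lambda_G(u,w)\geq 2$. Lov\'asz's theorem preserves only the \emph{uniform} lower bound $k$ (here $k=2$), not every local edge-connectivity; since your requirements may have $r(u,w)>2$, a complete splitting that merely keeps $\lambda\geq 2$ need not yield a realization. The conclusion you want --- complete splitting at $s$ preserving \emph{all} local values $\lambda_G(u,w)$ --- is Mader's theorem (the paper's Theorem \ref{theo:mader}) applied iteratively, and your own hypothesis checks are exactly what that iteration needs: $\deg_G(s)$ is even and stays even, and no cut-edge at $s$ can appear mid-process because each split preserves $\lambda(u,w)\geq 2$ among nodes of $W$ (when $s$ has two distinct neighbors a cut-edge would force $\lambda(u_1,u_2)=1$, and when $s$ has a single neighbor the $\geq 2$ parallel edges are not cut-edges) --- this is verbatim the argument of the paper's Lemma \ref{lemm:construct}. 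With ``Lov\'asz'' replaced by iterated Mader splitting, your proof of part (2) is correct (discarding any loops created only lowers the cost, so the bound $\lceil\frac12\sum_v R(v)\rceil$ is still met), and your part (1), via Frank's framework \cite{ref:Frank} or the one-edge reduction for $R(u)=1$ nodes, is consistent with how the paper disposes of that case.
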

In the case where there is a node $u$ with $R(u) = 1$, the problem easily reduces to that on $K_{V\setminus\{u\}}$ (by choosing one edge connecting $u$)~\cite{ref:Frank, ref:FrankText, schrijver}.

The main result of this note is a tree-metric version of Theorem \ref{theo:frank-chou}.
Suppose that $a$ is a tree metric represented by a tree $T$ and a nonnegative edge-length $l\colon E(T) \rightarrow \mathbb{R}_+$.
From connectivity requirement $R$, define an edge-capacity $c^R\colon E(T) \rightarrow \mathbb{Z}_+$ on $T$ by
\begin{align*}
	c^R(e) \coloneqq R(X_e)\quad(e \in E(T)).
\end{align*}
An {\it inner-odd join} (with respect to ($T, R$)) is an edge subset $F \subseteq E(T)$ such that
\begin{align*}
	c^R(\delta_T(v)) \equiv |\delta_T(v) \cap F|\ \mbox{mod 2}\quad(v \in V(T)\setminus V).
\end{align*}
\begin{theo}
\label{theo:ore}
	Suppose that $a$ is a tree metric represented by a tree $T$ and a nonnegative edge-length $l\colon E(T) \rightarrow \mathbb{R}_+$.
	Also, suppose that $R(X_e) > 1$ for all $e \in E(T)$. 
	Then the following hold:
	\begin{itemize}
		\item[{\rm (1)}] 
		A minimum-cost integer realization can be found in polynomial time.
		\item[{\rm (2)}] 
		The minimum-cost is equal to 
		\begin{align*}
			\sum_{e \in E(T)} l(e)R(X_e) + \min_{F}\sum_{e\in F}l(e),
		\end{align*}
		where the minimum is taken over all inner odd joins $F$ with respect to $(T, R)$.
	\end{itemize}
\end{theo}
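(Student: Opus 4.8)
The plan is to regard an integral realization as a family of paths routed along the tree $T$, thereby reducing INSP to a splitting-off problem on a capacitated copy of $T$, and to account for the integrality gap over Theorem~\ref{theo:HHT} by a parity correction encoded exactly by an inner-odd join. The starting point is a cost--congestion dictionary. For a realization $y$ and a tree edge $e\in E(T)$, the $K_V$-edges crossing $(X_e,V\setminus X_e)$ are precisely those whose $T$-path uses $e$, so $y(\delta(X_e))=\sum_{ij\,:\,e\in P_{ij}}y(ij)=:f_y(e)$, where $P_{ij}$ is the $i$--$j$ path in $T$. Since $a(ij)$ is the $l$-length of $P_{ij}$, expanding yields the identity $\sum_{ij}a(ij)y(ij)=\sum_{e\in E(T)}l(e)f_y(e)$. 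Feasibility at the cut $X_e$ reads $f_y(e)=y(\delta(X_e))\geq R(X_e)=c^R(e)$, and if $y$ is integral then at each inner node $v\in V(T)\setminus V$ every path $P_{ij}$ (with $i,j\in V$) uses $0$ or $2$ edges of $\delta_T(v)$, whence $f_y(\delta_T(v))$ is even.

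For the lower bound I would combine these two observations. Setting $g:=f_y-c^R\geq 0$, the parity fact gives $g(\delta_T(v))\equiv c^R(\delta_T(v))\pmod 2$ at every inner node, so $F:=\{e:g(e)\ \text{odd}\}$ is an inner-odd join; as $g(e)\geq 1$ whenever $g(e)$ is odd, $\sum_e l(e)g(e)\geq\sum_{e\in F}l(e)\geq\min_{F'}\sum_{e\in F'}l(e)$. Hence any integer realization costs $\sum_e l(e)f_y(e)=\sum_e l(e)c^R(e)+\sum_e l(e)g(e)\geq\sum_e l(e)R(X_e)+\min_F\sum_{e\in F}l(e)$, matching the claimed value.

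For the matching upper bound I would build an optimal realization by splitting-off, applying Frank's framework to $T$ rather than to a star. Fix a minimum inner-odd join $F^*$ and let $G_0$ be the multigraph on $V(T)$ with $f(e):=c^R(e)+\mathbf{1}_{F^*}(e)$ parallel copies of each $e$. By the definition of $F^*$ every inner node has even degree in $G_0$; and since the minimum $i$--$j$ cut of a tree-multigraph is the bottleneck $\min_{e\in P_{ij}}f(e)\geq\min_{e\in P_{ij}}R(X_e)\geq r(i,j)$, we get $\lambda_{G_0}(i,j)\geq r(i,j)\geq 2$ for all $i,j\in V$ --- this is exactly where the hypothesis $R(X_e)>1$ is used. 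Invoking Lov\'asz's complete splitting-off theorem, I would perform an admissible complete splitting at each inner node in turn, removing all of $V(T)\setminus V$ while preserving $\lambda(i,j)\geq r(i,j)$ on $V$, obtaining an integral $y$ with $y(\delta(X))\geq R(X)$ for all $X$. To control its cost, note that a single splitting replaces two edges by one and, for any fixed bipartition of the current vertex set, changes the number of crossing edges by $0$ or $-2$; thus no cut ever increases. Applied to the bipartition induced by $e$ (whose only crossing edges in $G_0$ are the $f(e)$ copies of $e$) this gives $y(\delta(X_e))\leq f(e)$, while feasibility gives $y(\delta(X_e))\geq c^R(e)$; hence $\sum_e l(e)f_y(e)\leq\sum_e l(e)f(e)=\sum_e l(e)R(X_e)+\sum_{e\in F^*}l(e)$, and the lower bound forces equality, proving (2).

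Finally, for (1) both ingredients are polynomial: a minimum inner-odd join is a minimum-weight $T$-join on the tree $T$ with terminal set $\{v\in V(T)\setminus V: c^R(\delta_T(v))\ \text{odd}\}$ and the nodes of $V$ left unconstrained, solvable in polynomial time (indeed by a direct computation on $T$), and Lov\'asz's complete splitting-off can be carried out in polynomial time with max-flow subroutines testing admissibility. I expect the achievability step to be the main obstacle, namely guaranteeing that an admissible complete splitting exists at every inner node throughout the elimination; this is precisely secured by $R(X_e)>1$, which keeps all local edge-connectivities at least $2$ and so meets the hypothesis of the splitting-off theorem. The cost bookkeeping is then disposed of cleanly by the monotonicity of cuts under splitting together with the lower bound, so that no delicate tracking of edge lengths during the individual splittings is required.
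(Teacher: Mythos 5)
Your proposal follows essentially the same route as the paper: your cost--congestion dictionary and the decomposition $g=f_y-c^R$ are exactly the paper's Theorem~\ref{theo:main1}(1) together with Corollary~\ref{coro:coro} (the observation that a minimum-cost $r$-feasible capacity is $c^R$ plus a minimum inner-odd join), and the upper bound is obtained, as in the paper, by completely splitting off all inner nodes of the capacitated tree. Your cost bookkeeping is a valid variant: the paper shows $\sum_{u,v} d(u,v)z(u,v)$ is non-increasing via the triangle inequality for the tree metric, while you track each bipartition induced by a tree edge $e$ and use that a single splitting changes any cut by $0$ or $-2$; the two arguments are equivalent in strength and either suffices.

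There is, however, one genuine flaw in the achievability step: you invoke Lov\'asz's complete splitting-off theorem, but that theorem preserves only a \emph{uniform global} edge-connectivity bound $k\geq 2$, so it cannot guarantee $\lambda(i,j)\geq r(i,j)$ when the requirements vary and exceed $2$. What your plan actually requires is Mader's \emph{local} splitting-off theorem (Theorem~\ref{theo:mader} in the paper), whose hypothesis is not a connectivity lower bound but the absence of a cut-edge incident to $s$ (together with $|\delta(s)|\notin\{1,3\}$, automatic here since inner degrees stay even). Moreover, the no-cut-edge condition must be re-verified at \emph{every} intermediate step of the elimination, not only in $G_0$; the paper devotes Lemma~\ref{lemm:construct} to exactly this, combining the initial $\lambda\geq 2$ (from $R(X_e)>1$) with the fact that Mader splittings preserve connectivities among all nodes other than $s$. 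You assert this persistence without proof, so your correct instinct about where $R(X_e)>1$ enters is not actually discharged. Two smaller slips: $r(i,j)\geq 2$ is not given (only $R(X_e)\geq 2$; the bound you need, $\lambda_{G_0}(i,j)\geq\max\{2,r(i,j)\}$, does follow from your bottleneck computation), and the parity-correction problem is not a standard $T$-join since the nodes of $V$ carry no parity constraint --- you note this yourself, and the paper resolves it by the $(I,J)$-join dynamic program of Algorithm~\ref{algorithm3}, which matches your ``direct computation on $T$.''
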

This generalizes Theorem \ref{theo:frank-chou} in the case of $R(u) > 1$.
Indeed, in the above star representation of the uniform cost, a minimum cost inner-odd join is $\{sv\}$ if $\sum_{u\in V} R(u)$ is odd, and $\emptyset$ otherwise.
We do not know whether tree-metric weighted INSP including $R(X) \leq 1$
is solvable in polynomial time; 
see also Remark~\ref{rem1}.
The proof of Theorem \ref{theo:ore} is given in the next section.

\begin{rem}
\label{rem1}
{\rm 
In the case where $R(X_e) = 0$ for some $e \in E(T)$, one
may naturally expect that  the problem is decomposed to that on
$K_{X_e}$ and on $K_{V \setminus X_{e}}$.
This is not true for INSP (but is true for NSP).
Indeed, let $V \coloneqq \{u_1,u_2,u_3,v_1,v_2,v_3\}$.
The requirement $r$ is defined by $r(u_i, u_j) = r(v_i, v_j) \coloneqq 3$ for $1 \leq i < j \leq 3$ and $r(u_i, v_j) \coloneqq 0$ for $1 \leq i,j \leq 3$.
Let $a$ be a tree metric on $V$ represented by $T,l$,
where $V(T) := V \cup \{u,v\}$, $E(T) := \{uv,
uu_1,uu_2,uu_3,vv_1,vv_2,vv_3\}$, $l(uv) = 1$, and $l(uu_i) = l(vv_i)
:= 2$ for $i=1,2,3$.
Now $R(X_{uv}) = 0$.
However, the problem cannot be decomposed; a minimum-cost integer
realization $y$ is given by 
$y(u_iu_j) = y(v_iv_j) = 1$ for $1 \leq i < j \leq 3$
and $y(u_iv_j) = 1$ if $i=j$, and 0 if $i \neq j$.
}
\end{rem}

\begin{rem}
\label{rem2}
{\rm
Suppose that we are further given capacity lower bound $g \colon E(K_V) \rightarrow \mathbb{Z}_+$.
The {\it edge-connectivity augmentation problem (ECAP)} is to find a minimum-cost increase $z \colon E(K_V) \rightarrow \mathbb{R}_+$ such that $g+z$ is a realization of $r$.
Frank~\cite{ref:Frank} shows that ECAP admits a half-integral optimal solution provided that the edge-cost is node-induced.
From this result and Theorem \ref{theo:HHT}, one can naturally conjecture that ECAP with a tree-metric cost has a half-integral optimal solution.
But this conjecture is wrong.
Indeed, we found instances having optimal values 638+1/3 and 9572+1/4~\cite{ref:nitta}.
}
\end{rem}

\section{Algorithm}
Let $V, r, a$ be an instance of INSP as above.
Suppose that $a$ is a tree metric represented by $T$ and $l$,
where $X_e$ is defined as above for $e \in E(T)$.
An integer edge-capacity $c \colon E(T) \rightarrow \mathbb{Z}_+$ is said to be $r$-{\it feasible} if $c$ satisfies the following:
\begin{itemize}
	\item[(c1)] 
	 $c(\delta_T(v))$ is even for all $v \in V(T)\setminus V$.
	 \item[(c2)] 
	 $c(e) \geq R(X_e)$ holds for all $e \in E(T)$.
\end{itemize}
The cost of an $r$-feasible edge-capacity $c$ is defined as $\sum_{e \in E(T)} l(e)c(e)$.
The essence of Theorem \ref{theo:ore} is the following:
\begin{theo}\label{theo:main1}
	Suppose that $R(X_e) > 1$ for all $e \in E(T)$.
	Then the following hold:
	\begin{itemize}
		\item[{\rm (1)}]
		For an integral realization $y$ of $r$, 
		define an edge-capacity $c \colon E(T) \rightarrow \mathbb{Z}_+$ by:
		\begin{align*}\label{defEdgeWeight}
		c(e) \coloneqq \sum \{y(ij) \mid ij \in E(K_V),\{i,j\}\cap X_e = 1\} \quad (e \in E(T)).
		\end{align*}
		Then $c$ is $r$-feasible with $\sum_{e \in E(T)}l(e)c(e) = \sum_{e \in E(K_V)}a(e)y(e)$.
		\item[{\rm (2)}] 
		For an $r$-feasible edge-capacity $c \colon E(T) \rightarrow  \mathbb{Z}_+$, there is an integral realization~$y$ of $r$ with $\sum_{e \in E(K_V)}a(e)y(e) \leq \sum_{e \in E(T)}l(e)c(e)$.
	\end{itemize}
\end{theo}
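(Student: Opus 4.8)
The two directions establish a cost-preserving correspondence between integral realizations of $r$ on $K_V$ and $r$-feasible edge-capacities on $T$. I would prove them separately.

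For part (1), the cost identity is the clean part: since $a(ij)$ equals the $l$-length of the $i$--$j$ path in $T$, each edge $ij$ of $K_V$ contributes $y(ij)$ to $c(e)$ exactly for those edges $e$ on its tree-path (those $e$ with $|\{i,j\}\cap X_e|=1$), so $\sum_e l(e)c(e)=\sum_{ij} y(ij)\sum_{e\in P_{ij}} l(e)=\sum_{ij} a(ij)y(ij)$ after exchanging the order of summation. Condition (c1), that $c(\delta_T(v))$ is even at each inner node $v\in V(T)\setminus V$, I expect to follow from a parity/handshake argument: $c(\delta_T(v))$ counts, with multiplicity, the $K_V$-edges whose tree-path crosses the cut around $v$; since $v\notin V$, no edge of $K_V$ has an endpoint at $v$, so every such path enters and leaves $v$, contributing an even amount to the total crossing the star $\delta_T(v)$. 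Condition (c2), $c(e)\ge R(X_e)$, is just the cut condition for $y$: $c(e)=y(\delta_{K_V}(X_e))\ge r(s,t)$ for any $s\in X_e,\ t\notin X_e$ realizing $R(X_e)$, since $y$ is a realization.

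For part (2)---the harder direction---I would start from an $r$-feasible $c$ on $T$ and construct an integral realization $y$ on $K_V$ of no greater cost. The natural idea is to regard $c$ as an integer capacity on the tree $T$ (an Eulerian-type object, by (c1) the inner nodes have even total capacity) and then \emph{split off} at the inner nodes $V(T)\setminus V$ to eliminate them, pushing the capacity down onto edges of $K_V$ while preserving all the relevant cut values. This is precisely the point where Frank's splitting-off framework, adapted from the star to the tree, enters: one repeatedly replaces two edges $uv,vw$ incident to an inner node $v$ by an edge $uw$, decreasing $c$ at $v$ by $2$ each time, and one must choose the pairing so that the resulting capacities on $K_V$ still dominate all the requirements $r(s,t)$. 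The hypothesis $R(X_e)>1$ for all $e$ (equivalently, all requirements of interest are at least $2$) is exactly what makes a complete admissible splitting-off exist---this is the classical obstruction in Lovász/Mader-type splitting theorems, where connectivity at least $2$ guarantees a feasible split at every even-degree inner node.

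The main obstacle, and the crux of the whole argument, is verifying that an admissible splitting-off sequence exists and preserves the cut requirements while not increasing the cost. Cost non-increase should be automatic or near-automatic, since splitting $uv,vw$ into $uw$ replaces length $l$-path cost $a(uv)+a(vw)$ on the contributions by $a(uw)\le a(uv)+a(vw)$ (triangle inequality for the tree metric, tight when $v$ lies on the $u$--$w$ path). The real work is the feasibility of the split: I would invoke (or re-prove in this tree setting) a Mader/Lovász-style theorem stating that at an inner node of even capacity, with every cut value at least $2$, there is a pair of edges that can be split off while maintaining all pairwise edge-connectivities, and then induct on the number of inner nodes until only $V$ remains, at which point the surviving capacity function on $K_V$ is the desired integral realization $y$.
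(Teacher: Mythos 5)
Your proposal is correct and follows essentially the same route as the paper: part (1) via the path-length double counting, the even-crossing parity argument at inner nodes, and the cut condition for (c2); part (2) via Mader's splitting-off theorem applied at the inner nodes of $T$, with $R(X_e)>1$ supplying the 2-edge-connectivity that rules out cut-edges (the paper's Lemma~3.3) and the triangle inequality giving cost non-increase. The only detail the paper makes explicit that you gesture at is the invariant that splitting off preserves edge-connectivity between all pairs other than $s$ \emph{throughout} the process, which is exactly what certifies the no-cut-edge hypothesis of Mader's theorem at every intermediate step.
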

\begin{coro}\label{coro:coro}
	The minimum cost of an integer realization of $r$ is equal to the minimum cost of an $r$-feasible capacity in $T$.
\end{coro}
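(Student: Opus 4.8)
The plan is to derive the equality directly from the two halves of Theorem~\ref{theo:main1}, each of which supplies one inequality between the two optimal values. Write $\alpha$ for the minimum cost $\sum_{e \in E(K_V)} a(e) y(e)$ over all integer realizations $y$ of $r$, and $\beta$ for the minimum cost $\sum_{e \in E(T)} l(e) c(e)$ over all $r$-feasible capacities $c$ on $T$. The goal is to show $\alpha = \beta$, which I would do by proving $\beta \le \alpha$ and $\alpha \le \beta$ separately.

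First I would establish $\beta \le \alpha$. Take an integer realization $y^*$ of $r$ achieving cost $\alpha$, and let $c$ be the capacity associated to $y^*$ by the formula in Theorem~\ref{theo:main1}(1). That part guarantees that $c$ is $r$-feasible and that its cost equals $\sum_{e \in E(K_V)} a(e) y^*(e) = \alpha$. Since $\beta$ is the minimum over all $r$-feasible capacities, $\beta \le \alpha$ follows immediately.

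For the reverse inequality $\alpha \le \beta$, I would take an $r$-feasible capacity $c^*$ achieving cost $\beta$. By Theorem~\ref{theo:main1}(2) there exists an integer realization $y$ of $r$ with $\sum_{e \in E(K_V)} a(e) y(e) \le \sum_{e \in E(T)} l(e) c^*(e) = \beta$. As $\alpha$ is the minimum cost of an integer realization, $\alpha \le \beta$. Combining the two inequalities gives $\alpha = \beta$, as claimed.

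Since I may assume Theorem~\ref{theo:main1}, there is no genuine obstacle left for the corollary itself: its entire content is packaged in that theorem, and the corollary is merely the observation that the two parts bound the two optimal values against each other in opposite directions. The only point requiring care is to apply each part to an optimizer of the appropriate type—part~(1) to a cost-optimal realization and part~(2) to a cost-optimal feasible capacity—so that the bound produced by one construction is matched by the other. The real work, of course, lies in Theorem~\ref{theo:main1}, and in particular in the constructive direction (2), where one must recover an \emph{integral} realization meeting every cut requirement from the capacities on $T$; that is where I would expect splitting-off on the tree-network to enter.
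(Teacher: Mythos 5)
Your proposal is correct and matches the paper's approach: the paper treats this corollary as an immediate consequence of Theorem~\ref{theo:main1}, whose part (1) maps any integer realization to an $r$-feasible capacity of equal cost and whose part (2) maps any $r$-feasible capacity to an integer realization of no greater cost, which is exactly the two-inequality argument you spell out. Your explicit bookkeeping with $\alpha$ and $\beta$ is just a more detailed rendering of the same reasoning, and your closing remark correctly locates the substantive work in the splitting-off construction behind part (2).
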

From this corollary, we easily obtain Theorem \ref{theo:ore} (2) as follows:
Observe that a minimum-cost $r$-feasible edge-capacity is obtained from $c^R$ by increasing $c^R(e)$ by 1 on edge $e$ in an inner-odd join $F$,
where the cost is equal to $\sum_{e\in E(T)} l(e) c^R(e) + l(F) = \sum_{e\in E(T)} l(e) R(X_e) + l(F)$.
Thus we obtain the formula in Theorem 2.4 (2).

We first show (1) in Theorem \ref{theo:main1}.
\begin{proof}[Proof of Theorem \ref{coro:coro} (1)]
	(c1) Consider $ij \in E(K_V)$ with $y(ij) > 0$.
	Then $y(ij)$ contributes to $c$ along the unique path $P$ in $T$ connecting $i$ and $j$.
	Thus $y(ij)$ contributes to $c(\delta(v))$ by $2y(ij)$ if $v\in V(P)\setminus V$ and 0 if $v \not\in V(P)$.
	
	(c2) If there exists an edge $e$ such that $c(e) < R(X_e)$, then the cut size $y(\delta(X_e))$ is less than $R(X_e)$ in graph $K_V$ under the capacity $y$. 
	This contradicts the assumption that $y$ is a realization of $r$.
\end{proof}
Next we show (2).
Let $c \colon E(T) \rightarrow \mathbb{Z}_+$ be an $r$-feasible edge-capacity.
\begin{lemm}
	Let $G$ be an undirected graph obtained from $T$ by replacing every edge $e \in E(T)$ by $c(e)$ parallel edges.
	For $u, v \in V$, the edge-connectivity between $u$ and $v$ in $G$ is at least $r(u, v)$.
\end{lemm}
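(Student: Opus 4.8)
The plan is to exploit the tree structure of $G$ together with the cut characterization of edge-connectivity, and to reduce the required bound to condition (c2). By Menger's theorem the edge-connectivity between $u$ and $v$ in $G$ equals the minimum, over all $Y \subseteq V(T)$ with $u \in Y \not\ni v$, of the number of edges of $G$ leaving $Y$. Since $G$ is obtained from $T$ by taking $c(e)$ parallel copies of each edge $e$, the edges of $G$ leaving $Y$ are precisely the copies of the tree edges in $\delta_T(Y)$, so this minimum-cut value is exactly $c(\delta_T(Y))$. Hence it suffices to prove $c(\delta_T(Y)) \geq r(u,v)$ for every such $Y$.

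First I would fix such a $Y$ and consider the unique path $P$ between $u$ and $v$ in the tree $T$. Since $u \in Y$ and $v \notin Y$, the path $P$ must cross the boundary of $Y$, so it contains at least one edge $e \in \delta_T(Y)$. Deleting $e$ from $T$ splits the tree into two parts, and because $e$ lies on $P$ the nodes $u$ and $v$ fall on opposite parts. Consequently exactly one of $u,v$ belongs to $X_e$, so $r(u,v)$ occurs among the values $\{r_{ij} \mid i \in X_e \not\ni j\}$, giving $r(u,v) \leq R(X_e)$ directly from the definition of $R(X_e)$.

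It then remains to invoke feasibility. By (c2) we have $c(e) \geq R(X_e) \geq r(u,v)$, and since $e \in \delta_T(Y)$ and all capacities are nonnegative, $c(\delta_T(Y)) \geq c(e) \geq r(u,v)$. As $Y$ was an arbitrary cut separating $u$ and $v$, the minimum cut value between $u$ and $v$ is at least $r(u,v)$, which is exactly the claimed lower bound on the edge-connectivity.

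I do not expect a serious obstacle; the only point needing care is the identification of minimum cuts in $G$ with the tree cuts $c(\delta_T(Y))$, which relies on $G$ being a capacitated tree so that every path between $u$ and $v$ routes through the unique tree path $P$. It is worth emphasizing that condition (c1) (the evenness of $c(\delta_T(v))$ at inner nodes $v \in V(T)\setminus V$) is \emph{not} used in this lemma; the argument only needs (c2). The evenness condition (c1) will instead be required afterwards, when the graph $G$ is converted into an integral realization $y$ on $K_V$ of the same cost, presumably through an Eulerian/splitting-off argument that preserves the edge-connectivities guaranteed here.
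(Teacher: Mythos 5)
Your proof is correct and follows essentially the same route as the paper: both arguments reduce the claim to the inequality $r(u,v) \leq R(X_e) \leq c(e)$ (via (c2)) for edges $e$ on the unique $u$--$v$ path in $T$. Your Menger/cut argument merely spells out the paper's opening observation that the edge-connectivity in a capacitated tree is the minimum of $c(e)$ along that path, and your closing remark that (c1) is not needed here is accurate.
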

\begin{proof}
	The edge-connectivity between $u$ and $v$ is the minimum of $c(e)$ over edges $e$ in the unique path in $T$ connecting $u$ and $v$.
	For all $e$ in the path, it holds that $r(u, v) \leq R(X_e) \leq c(e)$, where
	the last inequality follows from (c2).
	Therefore the edge-connectivity between $u$ and $v$ is at least $r(u, v)$.
\end{proof}
In order to construct an integral realization of $r$ from edge-capacity $c$, we utilize Mader's theorem~\cite{ref:mader}, which is a basis of splitting-off technique. 
The {\it splitting off} a pair of edges $uv, vz$ is to replace the two edges $uv$, $vz$ by a new edge $e = uz$. 
Note that $e$ is a loop if $u=z$.
Call a pair of edges $vs, st$ {\it splittable} if they can be split off so as to preserve the edge-connectivity between every two nodes other than $s$.
\begin{theo}[\cite{ref:mader}]\label{theo:mader}
	Let $G = (V, E)$ be a connected undirected graph and let $s$ be a node of $V$ with $|\delta_G(s)| \not\in \{1, 3\}$.
	If there is no cut-edge incident to $s$, then there exists a splittable pair $(su, sv)$ of edges.
\end{theo}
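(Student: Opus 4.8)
The plan is to recast the connectivity-preservation condition as a condition on the cut function of $G$ and then locate the splittable pair by an uncrossing/extremal argument. Write $d(X) := |\delta_G(X)|$ for $X \subseteq V \setminus \{s\}$ and let $\lambda(x,y)$ denote the edge-connectivity between $x$ and $y$. The first step is a local cut computation: splitting off $su,sv$ (that is, deleting $su,sv$ and adding $uv$) changes $d(X)$ by $-2$ exactly when $\{u,v\} \subseteq X$, and leaves $d(X)$ unchanged otherwise. Since by Menger's theorem $\lambda(x,y) = \min\{ d(X) : X \subseteq V\setminus\{s\},\ |\{x,y\}\cap X| = 1\}$, the pair $(su,sv)$ is splittable if and only if every set $X$ with $\{u,v\}\subseteq X \subseteq V\setminus\{s\}$ satisfies $d(X) \ge R(X)+2$, where $R(X) := \max\{\lambda(x,y) : x \in X,\ y \in V\setminus(X\cup\{s\})\}$. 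Calling a set $X$ \emph{dangerous} when $d(X) \le R(X)+1$, this says precisely: $(su,sv)$ is splittable if and only if no dangerous set contains both $u$ and $v$.

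The next step is an uncrossing lemma for dangerous sets. Here I would use that the cut function is submodular and posimodular, $d(X)+d(Y) \ge d(X\cap Y)+d(X\cup Y)$ and $d(X)+d(Y)\ge d(X\setminus Y)+d(Y\setminus X)$, together with the ultrametric-type inequality $\lambda(x,y) \ge \min\{\lambda(x,z),\lambda(z,y)\}$, which controls how $R$ behaves under intersections and differences. The goal of this step is to show that if $X$ and $Y$ are dangerous and share a neighbour of $s$, then either $X\cap Y$ and $X\cup Y$ are both dangerous, or $X\setminus Y$ and $Y\setminus X$ are both dangerous.

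With these tools I would argue by contradiction. If no splittable pair exists, then by the first step every two edges leaving $s$ (counting parallel copies) are blocked by a common dangerous set. Starting from a maximal dangerous set and repeatedly uncrossing it against the dangerous sets that block the remaining neighbours, the aim is to force all neighbours of $s$ into a single maximal dangerous set $M$ (or a tightly controlled pair of them covering the neighbourhood of $s$). Since $s \notin M$, every edge incident to $s$ crosses $M$, whence $|\delta_G(s)| \le d(M) \le R(M)+1$; unwinding $R(M)\le d(M)$ then pins down the slack and forces $|\delta_G(s)|$ to be small. Finally the hypothesis that no cut-edge is incident to $s$ excludes the tight degenerate configurations (in particular $|\delta_G(s)|\ge 2$), and $|\delta_G(s)| \notin \{1,3\}$ rules out the remaining small cases.

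The main obstacle is exactly this last extremal step. The uncrossing lemma is routine once the right three-point inequality for $\lambda$ is in hand, but closing the contradiction is delicate: the degree-$3$ case is a genuine exception, since there exist graphs with a degree-$3$ vertex admitting no splittable pair, so the counting must simultaneously exploit the one-unit slack in the defining inequality $d(X) \le R(X)+1$ of a dangerous set and the bridge-freeness at $s$ to eliminate the borderline cuts. Getting the bookkeeping of maximal dangerous sets and their overlaps right, so that both hypotheses enter precisely where they are needed, is the crux of the argument.
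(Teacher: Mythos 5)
Your opening reduction is correct and standard: for $X\subseteq V\setminus\{s\}$ the splitting of $su,sv$ decreases $d(X)$ by $2$ exactly when $\{u,v\}\subseteq X$, so by Menger the pair $(su,sv)$ is splittable if and only if no \emph{dangerous} set (one with $d(X)\le R(X)+1$) contains both $u$ and $v$. This is precisely the opening move of the Lov\'asz--Frank--Mader proofs. Note, for calibration, that the paper itself offers no proof of this statement --- it is imported from Mader's 1978 paper as a black box --- so your attempt must stand on its own; and as a standalone proof it has two genuine gaps, both of which you partly acknowledge. First, the uncrossing lemma is only stated as a goal. Its proof is not a formal consequence of submodularity, posimodularity and the inequality $\lambda(x,y)\ge\min\{\lambda(x,z),\lambda(z,y)\}$; one needs a case analysis on where the pairs attaining $R(X\cup Y)$, $R(X\cap Y)$, $R(X\setminus Y)$, $R(Y\setminus X)$ sit, and the dichotomy actually used in the literature is subtler than the one you state (it involves the quantity $d(s,X\cap Y)$ and the complement side, not merely ``intersection/union dangerous or both differences dangerous''). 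Asserting it as ``routine'' is optimistic.

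Second, and more seriously, the endgame is aimed at the wrong terminal configuration. A single maximal dangerous set $M$ containing \emph{all} neighbours of $s$ is impossible outright once $d(s)\ge 2$: if every edge at $s$ enters $M$, then $d(M)=d(s)+d(M\cup\{s\})$, and any pair $x\in M$, $y\notin M\cup\{s\}$ attaining $R(M)$ is separated by the cut $M\cup\{s\}$, so $R(M)\le d(M\cup\{s\})=d(M)-d(s)$; dangerousness $d(M)\le R(M)+1$ then forces $d(s)\le 1$. Likewise two maximal dangerous sets cannot block every pair: neighbours $t\in X_1\setminus X_2$ and $u\in X_2\setminus X_1$ require a third dangerous set containing both. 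The true obstruction --- and the only way the excluded value $|\delta_G(s)|=3$ can emerge, as your own $K_4$-type example shows it must --- is a configuration of \emph{three} maximal dangerous sets pairwise covering the neighbours of $s$, and the counting over this triple (summing the three dangerous inequalities against submodularity and the degree of $s$) is where the hypotheses $|\delta_G(s)|\notin\{1,3\}$ and bridge-freeness actually do their work. Your sketch defers exactly this step (``the crux of the argument''), so what you have is a correct reduction plus an honest plan whose decisive combinatorial argument is missing, and whose anticipated one-or-two-set structure would not close the contradiction even if the uncrossing lemma were in hand.
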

We are ready to prove Theorem \ref{theo:main1}~(2).
For each node $s \in V(T)\setminus V$, We repeatedly use splitting off so as to preserve edge-connectivity between every two nodes other than $s$.
More precisely, we execute the Algorithm~\ref{algorithm1}.
\begin{algorithm}
\caption{Obtain an integer realization $y$ from $r$-feasible edge-capacity}
\label{algorithm1}
\begin{algorithmic}[1]
\Require A tree $T$ with $V \subseteq V(T)$ and an $r$-feasible edge-capacity $c \colon E(T)\rightarrow\mathbb{Z}_+$. 
\Ensure An integer realization $y$ of $r$.
\Function{ObtainIntegerRealization}{$T, c$}
\State Construct a graph $G$ from $T$ by replacing every edge $e \in E(T)$ by $c(e)$ parallel edges
\While{there exists a node $s \in V(G)\setminus V$ with $|\delta_{G}(s)| \not= 0$}
\State Repeat splitting off at $s$ so as to preserve the edge-connectivity other than $s$ unless $|\delta_{G}(s)| = 0$
\EndWhile
\For{$uv \in E(K_V)$}
\State $y(uv) \leftarrow$ the number of edges between $u$ and $v$ in $G$
\EndFor
\State\Return $y$
\EndFunction
\end{algorithmic}
\end{algorithm}
It is clear that $|\delta_G(s)|$ is always even in Line 4.
The following lemma and Theorem \ref{theo:mader} guarantee that there always exists a splittable pair of edges incident to $s$ in Line 4.
\begin{lemm}\label{lemm:construct}
	In Line 4, there is no cut-edge incident to $s$.
\end{lemm}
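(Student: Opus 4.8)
The plan is to isolate two invariants that the splitting-off procedure maintains and then derive the non-existence of a cut-edge at $s$ from a simple parity/counting argument. The two invariants are: (i) every inner node $v \in V(G)\setminus V$ has \emph{even} degree $|\delta_G(v)|$ at all times, and (ii) for every pair of terminals $u,u' \in V$ the edge-connectivity between $u$ and $u'$ in $G$ is at least $2$. I would prove the lemma by assuming a cut-edge $sw$ incident to $s$ exists and using (i) to force terminals onto both sides of the corresponding one-edge cut, then invoking (ii) to contradict the existence of that cut.

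First I would verify the invariants. For (i), the initial graph $G$ has $|\delta_G(v)| = c(\delta_T(v))$, which is even for inner $v$ by condition (c1); and a single splitting-off of a pair $(su,sv)$ into $uv$ changes no node's degree except that of the split node $s$, which drops by exactly $2$. Hence all inner-node degrees remain even throughout, which is exactly the parity observation already noted before Line~4. For (ii), in the initial multigraph the edge-connectivity between $u$ and $u'$ equals $\min_e c(e)$ over the edges $e$ on the unique $u$--$u'$ path in $T$, and $c(e) \geq R(X_e) \geq 2$ by (c2) together with the hypothesis $R(X_e) > 1$. Since each splitting-off at an inner node preserves edge-connectivity between all pairs other than the split node, and terminals are never removed, this lower bound of $2$ persists for every terminal pair throughout the algorithm.

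With these in hand, suppose for contradiction that at Line~4 there is a cut-edge $sw$ incident to $s$. Let $A$ be the vertex set of the component of $G - sw$ containing $s$ and $B = V(G)\setminus A$, so that $\delta_G(A) = \{sw\}$. Counting degrees gives $\sum_{v\in A}|\delta_G(v)| = 2|E(G[A])| + 1$, an odd number, so $A$ contains an odd number of odd-degree vertices; by invariant (i) every inner node is even, so at least one such vertex lies in $V$, i.e. $A\cap V \neq \emptyset$. Applying the identical count to $B$ yields $B\cap V \neq \emptyset$. Choosing $u \in A\cap V$ and $u'\in B\cap V$, the single-edge cut $\delta_G(A)=\{sw\}$ separates $u$ from $u'$, so their edge-connectivity is at most $1$, contradicting invariant (ii).

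The step I expect to be the main obstacle is identifying the \emph{right} connectivity invariant. The obvious guess, that $\lambda_G(u,u')$ stays at least the requirement $r(u,u')$, is too weak because an individual $r(u,u')$ may be $0$ or $1$; the point is that the global hypothesis $R(X_e) > 1$ forces \emph{every} terminal pair—regardless of its own requirement—to have edge-connectivity at least $2$ in the initial bundle-multigraph, and this is precisely the bound the parity argument consumes. Everything else is routine once the even-degree and uniform-$2$-connectivity invariants are in place.
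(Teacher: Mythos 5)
Your proof is correct, and it reaches the paper's conclusion by a genuinely different finishing move. Both arguments run on the same engine: initially every pair of vertices of $G$ is $2$-edge-connected because $c(e) \geq R(X_e) \geq 2$ by (c2) and the hypothesis $R(X_e) > 1$, and splitting-off preserves edge-connectivity between all pairs other than the split node. The paper exploits this invariant for \emph{all} vertex pairs, inner tree nodes included: after a case split (whether $s$ has one neighbor or at least two), it picks arbitrary vertices $u_1 \in U_1 \setminus \{s\}$ and $u_2 \in U_2 \setminus \{s\}$ on the two sides of the alleged cut-edge and contradicts their preserved connectivity. You instead weaken the connectivity invariant to terminal pairs only and compensate with a handshake-parity count: since $\delta_G(A) = \{sw\}$, each of $A$ and its complement has odd degree sum, hence contains an odd-degree vertex, which by your invariant (i) --- the paper's own observation that $|\delta_G(s)|$ stays even, extended to all inner nodes via (c1) and the fact that each split drops the split node's degree by $2$ --- must lie in $V$. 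This buys you a uniform argument with no case analysis on the neighbors of $s$, and your closing remark correctly identifies the crux: the pointwise bound $r(u,u')$ would be useless, and it is the global hypothesis $R(X_e) > 1$ that makes every terminal pair $2$-connected. The paper's route is shorter because in the tree-derived multigraph the all-pairs invariant comes for free. Two small points you should make explicit: mid-algorithm $G$ can be disconnected (fully processed inner nodes sit as isolated, degree-$0$, hence even, vertices), which your count tolerates precisely because you take $B = V(G)\setminus A$ rather than a component; and earlier splits can create loops (possibly at $s$ itself), but loops contribute $2$ to a degree and nothing to any cut, so both your parity bookkeeping and the identity $\sum_{v\in A}|\delta_G(v)| = 2|E(G[A])| + 1$ survive under either convention of keeping or discarding them.
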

\begin{proof}
	In the initial $G$, the edge-connectivity between every pair of vertices is at least 2 since $c(e) \geq R(X_e) > 1$ (by (c2)).
	If $s$ has only one neighbor $v$, then there are at least two edges between $s$ and $v$;
	the claim of this lemma is obvious.
	If $s$ has at least two neighbors and the cut-edge $e$, the graph is divided into two components $U_1$ and $U_2$ by the deletion of $e$.
	Since $s$ has at least two neighbors, we can choose $u_1 \in U_1\setminus\{s\}$ and $u_2 \in U_2\setminus\{s\}$.
	Since the edge~$e$ is a cut-edge, the edge-connectivity between $u_1$ and $u_2$ is 1.
	On the other hand, the edge-connectivity between $u_1$ and $u_2$ is not changed in the middle of the algorithm.
	This is a contradiction.
\end{proof}
The following lemma shows that the cost of realization of $r$ is at most $\sum_{e \in E(T)} l(e)c(e)$ and we have proved Theorem \ref{theo:main1} (2).
\begin{lemm}
	Suppose that $y$ is the output of Algorithm 1.
	Then $\sum_{e \in E(K_V)}a(e)y(e) \leq \sum_{e \in E(T)}l(e)c(e)$ holds.
\end{lemm}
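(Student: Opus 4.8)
The plan is to run a single monovariant argument: I would put a weight on the edges of the intermediate multigraph $G$ that starts out equal to the right-hand side, never increases as the algorithm runs, and ends up equal to the left-hand side.

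First I would assign to every edge $f$ of the current graph $G$ — where $f$ joins two nodes $p, q \in V(T)$ — the weight $w(f)$ defined as the $l$-length of the unique path between $p$ and $q$ in $T$, and set $W(G) := \sum_{f \in E(G)} w(f)$. Since the tree metric on $V(T)$ is by definition this path-length function, $w$ restricted to pairs from $V$ agrees with $a$, i.e.\ $w(ij) = a(ij)$ for $i, j \in V$. I would then pin down the two endpoints of the monovariant. At the start, $G$ consists of $c(e)$ parallel copies of each tree edge $e = pq$; each copy has weight equal to the $l$-length of the single-edge path consisting of $e$ alone, namely $l(e)$. Hence the initial weight is $\sum_{e \in E(T)} l(e) c(e)$, the right-hand side. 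At termination every node of $V(T) \setminus V$ has degree $0$, so every surviving edge joins two nodes of $V$; grouping the surviving edges by their endpoint pair $\{i, j\}$ and using $w(ij) = a(ij)$ together with the assignment $y(ij) = $ (number of edges between $i$ and $j$ in the final $G$), the final weight equals $\sum_{ij \in E(K_V)} a(ij) y(ij)$, the left-hand side.

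The remaining step is monotonicity under splitting off. A single splitting-off replaces two edges $uv, vz$ by one edge $uz$, so it changes $W(G)$ by exactly $w(uz) - w(uv) - w(vz)$. Because $w$ is a metric (the tree distance) on $V(T)$, the triangle inequality gives $w(uz) \le w(uv) + w(vz)$, and therefore each splitting-off does not increase $W(G)$. Chaining the three facts — initial weight equals the right-hand side, $W(G)$ is non-increasing throughout the \textbf{while}-loop, and final weight equals the left-hand side — yields $\sum_{e \in E(K_V)} a(e) y(e) \le \sum_{e \in E(T)} l(e) c(e)$, which is the claim.

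I do not expect a genuine obstacle; the argument is a clean monovariant driven by the triangle inequality for the tree metric. The one point to state with care is the initialization: each of the $c(e)$ parallel copies of a tree edge $e$ must receive weight \emph{exactly} $l(e)$ (not the distance between the original tree endpoints via some longer route), so that the initial weight matches the cost of $c$ on the nose rather than merely bounding it. Everything else is bookkeeping over the edges of $G$.
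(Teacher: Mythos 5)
Your proof is correct and is essentially the paper's own argument: the paper defines the same monovariant, $\sum_{u,v} d(u,v)z(u,v)$ with $d$ the tree distance and $z$ the edge multiplicity, checks the same two boundary values, and uses the same triangle-inequality step to show non-increase under each splitting-off. Your extra care about the initialization is automatic here, since in a tree the unique path between the endpoints of an edge $e$ is $e$ itself, so each parallel copy has weight exactly $l(e)$.
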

\begin{proof}
	Algorithm 1 repeats splitting-off operations.
	For each step, we define $z\colon V(G)\times V(G) \rightarrow \mathbb{Z}_+$ and $d\colon V(G)\times V(G)\rightarrow \mathbb{R}_+$ as follows:
	\begin{align*}
		z(u, v) &\coloneqq \mbox{the number of edges between $u$ and $v$}, \\
		d(u, v) &\coloneqq \mbox{the shortest length of a path in $T$ connecting $u$ and $v$ with respect to $l$}.
	\end{align*}
	Then the following hold:
	\begin{itemize}
		\item Before the initial splitting-off, $\sum_{u, v \in V(G)}d(u, v)z(u, v) = \sum_{e \in E(T)}l(e)c(e)$
		\item After the last splitting-off, $\sum_{u, v \in V(G)}d(u, v)z(u, v) = \sum_{e \in E(K_V)}a(e)y(e)$
	\end{itemize}
	Thus it suffices to show that $\sum_{u, v \in V(G)}d(u, v)z(u, v)$ does not increase for each splitting-off operation.
	Suppose that the splitting-off is applied to a pair $uv, vw$.
	Then the difference of $\sum_{u, v \in V(G)}d(u, v)z(u, v)$ is $d(u,w)-d(u,v)-d(v,w)$ and is nonpositive by the triangle inequality.
	Hence $\sum_{u, v \in V(G)}d(u, v)z(u, v)$ is non-increasing.
\end{proof}
\begin{algorithm}
	\caption{Find a minimum-cost integer realization}
	\label{algorithm2}
	\begin{algorithmic}[1]
		\Require A tree $T$ and a connectivity requirement $r$ (and $R$).
		\Ensure A minimum-cost integer realization $y$ of $r$.
		\Function{FindMinimumCostIntegerRealization}{$T, R$}
		\State Find a minimum cost inner-odd join $F$ with respect to $(T,R)$	(by Algorithm~\ref{algorithm3} below)
		\For{$e \in E(T)$}
			\State $c(e) \leftarrow c^R(e)$
			\If{$e\in F$}
			\State $c(e) \leftarrow c(e)+1$
			\EndIf
		\EndFor
		\State $y \leftarrow$ \Call{ObtainIntegerRealization}{$T$, $c$}
		\State\Return $y$.
		\EndFunction
	\end{algorithmic}
\end{algorithm}
\begin{algorithm}
	\caption{Find a minimum-cost $(I, J)$-join}
	\label{algorithm3}
	\begin{algorithmic}[1]
		\Require A tree $T$ and a pair $(I, J)$ of disjoint node subsets in $V(T)$.
		\Ensure A minimum-cost $(I, J)$-join $F$.
		\Function{FindMinimumCostJoin}{$T, I, J$}
		\If{$|V(T)| = 1$}
		\If{$J = \emptyset$}
		\State\Return $\emptyset$
		\Else
		\State\Return no solution
		\EndIf
		\EndIf
		\State Choose an arbitrary edge $e = uv \in E(T)$.
		\State $T^1, T^2 \leftarrow $ trees obtained from $T$ by the deletion of $e$.
		\For{$i = 1, 2$}
		\State $I^{i} \leftarrow V(T^{i}) \cap I, J^{i} \leftarrow V(T^{i}) \cap J$
		\State $I_{e}^{i} \coloneqq I^{i} \triangle \{u, v\}, J_{e}^{i} \coloneqq J^{i} \triangle\{u, v\}$
		\State $F^i \leftarrow $ \Call{FindMinimumCostJoin}{$T, I^i, J^i$}
		\State $F^i_e \leftarrow $ \Call{FindMinimumCostJoin}{$T, I^i_e, J^i_e$}
		\EndFor
		\If{$l(F^{1}) + l(F^{2}) < l(F^{1}_{e}) + l(F^{2}_{e}) + l(e)$}
		\State\Return $F^1 \cup F^2$.
		\Else
		\State\Return $F^{1}_{e}\cup F^{2}_{e} \cup \{e\}$.
		\EndIf
		\EndFunction
	\end{algorithmic}
\end{algorithm}
Algorithm~\ref{algorithm2} finds a minimum-cost integer realization.
To complete the proof of Theorem \ref{theo:ore}, we show that Algorithm 2 runs in polynomial time.
First we deal with Line 2 to 4 in Algorithm 2.
We consider the following generalized problem:
Given a tree $T$, disjoint node sets $I, J \subseteq V(T)$ and a nonnegative edge-cost $l \colon E(T) \rightarrow \mathbb{R}_+$, find a minimum-cost edge subset $F\subseteq E(T)$ satisfying:
\begin{align*}
	|\delta_T(v)\cap F| \equiv \begin{cases}&0 \quad\mbox{ mod 2}\quad\mbox{if } v \in I,\\
						&1 \quad\mbox{ mod 2}\quad\mbox{if } v \in J,
						\end{cases}
\end{align*}
where the cost of $F$ is defined as $l(F)$.
Such an edge subset is called an {\it $(I, J)$-join}.
Note that an inner-odd join is exactly an $(I, J)$-join for
\begin{align*}
	I &\coloneqq \{v \in V(T)\setminus V \mid c^R(\delta_T(v)) \mbox{ is even}\},\\
	J &\coloneqq (V(T)\setminus V) \setminus I.
\end{align*}
Algorithm~\ref{algorithm3} is a simple dynamic programming to find a minimum-cost $(I,J)$-join,
where $\triangle$ represents symmetric difference.
The correctness of this algorithm follows from the observation that $F^1 \cup F^2$ is a minimum-cost $(I, J)$-join not using edge $e$ and that $F^1_e \cup F^2_e \cup \{e\}$ is a minimum-cost $(I, J)$-join using edge $e$.

Finally we discuss the time complexity of the whole algorithm.
Obviously, Algorithm 3 is implemented by $O(n)$ time.
A naive implementation of Algorithm 2 does not yield a strongly polynomial time algorithm, since we replaced each $e \in E(T)$ by $c(e)$ parallel edges.
Therefore we regard $c(e)$ as a capacity on $e \in E(T)$ and we use a capacitated version of splitting-off.
It takes $O(n^6)$ time to complete splitting-off at node $s$; see~\cite{ref:Frank}.
There may be $O(n)$ nodes in $V(T)\setminus V$.
Thus the overall complexity of the whole algorithm is $O(n^7)$.
\section*{Acknowledgments}
The work was partially supported by JSPS KAKENHI Grant Numbers 25280004, 26330023, 26280004, 17K00029.

\end{document}